\newtheorem{theorem}{Theorem}[section]
\newtheorem{lemma}[theorem]{Lemma}
\newtheorem{thm}[theorem]{Theorem}
\newtheorem{defn}[theorem]{Definition}
\newtheorem{rmk}[theorem]{Remark}
\newtheorem{exam}[theorem]{Example}
\newcommand{\Rmnum}[1]{\expandafter\@slowromancap\romannumeral #1@}
\begin{document}
	\title
	{\bf Gromov-Hausdorff convergence theory of surfaces}
	\author
	{Jianxin Sun, Jie Zhou}
	\address{
		\newline
		Jianxing Sun:
		Academy of Mathematics and Systems Science, CAS,
		Beijing 100190, P.R. China.
		{\tt Email:sunjianxin201008@126.com}
		\newline
		\newline
		Jie Zhou:
		Academy of Mathematics and Systems Science, CAS,
		Beijing 100190, P.R. China.
		{\tt Email:zhoujie2014@mails.ucas.ac.cn}}
	\date{}
	\maketitle
\begin{abstract}
   In this paper, we use the viewpoint of Gromov-Hausdorff convergence to give some new comprehension of well known theorem,it is Huber's classification theorem\cite{Huber}\cite{MS}for  complete Riemannian surfaces immersed in $\mathbb{R}^n$ with finite total curvature( $\int_{\Sigma}|A|^2<+\infty$)  
   it depend heavily on  M\"{u}ller and  \v{S}ver\'{a}k's Hardy-estimate\cite{MS} for the curvature form of surfaces immersed in $\mathbb{R}^n$ with finite total curvature.
\end{abstract}
\section{Introduction}
Let $F:\Sigma\to \mathbb{R}^n$ be an immersion of the surface $\Sigma$ in $\mathbb{R}^n$. The total curvature of $F$ is defined by
$$\int_{\Sigma}|A|^2d\mu_g,$$
where $A$ is the second fundamental form and $g=dF\otimes dF$ is the induced metric. There are many results about the $L^p(p\ge 2)$ norm of the second fundamental form.

In general, just as it is done in \cite{L}, for extrinsic surface,  the standard viewpoint is to regard the surface as the graph over its tangent space, and the convergence is in the meaning of graph.In\cite{L}, Langer first proved surfaces with $\|A\|_{L^p}\le C (p>2)$ are locally $C^{1}$-graphs over small balls(but with uniform radius) of the tangent spaces and deduced the compactness of such surfaces in the sense of graphical convergence. Recently, Breuning considered a higher dimensional generalization of Langer's theorem in \cite{L}. He proved that an immersion $f:M^n\to \mathbb{R}^{n+l}$  with bounded volume and $\int_M|A|^pd\mu_g\le C(p>n)$ is a $C^{1,\alpha}$-graph in a uniformly small ball for $\alpha<1-\frac{n}{p}$. This is a geometric analogue of the Sobolev embedding $W^{2,p}\to C^{1,1-\frac{n}{p}}$ if we regard the second fundamental form as the ``second derivative" of an immersing mapping of submanifold. 

But in the critical case $p=2$, when  taking a glimpse at the state of Leon Simon's decomposition theorem(Lemma 2.1 in \cite{LS}), one may not think that it is easy to establish a similar graphical theorem as Langer did  in \cite{L}. In this critical case $p=n=2$, Leon Simon proved a decomposition theorem \cite[lemma 2.1]{LS} which says a surface with bounded volume and sufficiently small total curvature is an almost flat Lipschitz graph outside of some small topological disks. Using this and noticing the total curvature is equal to the Willmore functional $\int_{\Sigma}|H|^2d\mu_g$ up to a constant for a closed surface with fixed topology, he got the existence of surfaces minimizing the Willmore functional.

Another important observation of immersing surfaces with finite total curvature is the compensated compactness phenomenon obtained in \cite{MS}. In general, the total curvature only controls the $L^1$-norm of the Gaussian curvature $\int_{\Sigma}|K|d\mu_g$. But under the condition
$$\int|A|^2\le 4\pi\varepsilon,$$
M\"{u}ller and  \v{S}ver\'{a}k estimated the Hardy norm of $*Kd\mu_g$  and solved the equation
$$-\triangle v=*Kd\mu_g$$
such that $v\in L^{\infty}$.
Using this $L^{\infty}$-estimation of the metric, E.Kuwert, R. Sch\"{a}tzle, Y.X. Li \cite{KS12}\cite{KL12} and T.Riviššre \cite{Ri13} proved the compactness theorem of immersing maps $f:\Sigma_g\to \mathbb{R}^n$ with Willmore functional value
$$Will(f)< 8\pi.$$
With the compactness theorem, they gave an alternate approach of existence of the Willmore minimizer, see \cite{KS13}\cite{Ri14}. Their mainly observation is that the Willmore functional value will jump over the gap $8\pi$ as the complex structure diverges to the boundary of the moduli space $\mathcal{M}_g$. And once the complex structure $\phi_k:\Sigma_k\to \Sigma_g$ converges, they can regard $f_k\circ \phi_k$ to be conformal and consider the convergence of these mappings in weak $W^{2,2}_{loc}(\Sigma\backslash S,\mathbb{R}^n)$ topology.

Geometrically, the divergency of the complex structure means the collapsing of geodesics, which changes the local topology(or area density) and contributes a gap to the Willmore functional value. To use a geometric quantity to replace the convergence of the complex structure to rule out the collapsing phenomenon, we  cite the width of $(\Omega_a, g)$ observed by J.Y.Chern and Y.X.Li in \cite{Jingyi Chern Yuxiang Li}.

  For example, for Huber's classification theorem of the complex structure of complete immersed surface in $\mathbb{R}^n$ with finite total curvature, it is well known that the diffeomorphic structure of such surface is of the type $\Sigma_g\backslash \{p_1,p_2,...p_l\}$. So the local complex structure of $\Sigma$ around each $p_i$ is either parabolic or hyperbolic, i.e. $B^{\Sigma}_{r_i}(p_i)\backslash \{p_i\}$ is conformal to $\Omega_a=\{z\in \mathbb{C}|a<|z|\le 1\}$ for some $a\ge 0$. The goal is to exclude the hyperbolic case $a>0$.
  
  The starting point is the lower bound of the width of $(\Omega_a, g)$ observed by J.Y.Chern and Y.X.Li in \cite{Jingyi Chern Yuxiang Li}. They use M\"{u}ller and  \v{S}ver\'{a}k's Hardy-estimate to reduce the extrinsic condition $\int_{\Sigma}|A|^2<+\infty$ to the intrinsic condition $l_0(g)>0$ (positive width), which intuitively means the positivity of the injective radius of the complete metric $(\Omega_a, g)$  near the infinite boundary(lemma \ref{positive injective radius}). So the viewpoint of Gromov-Hausdorff convergence works. And we will prove, when looked from infinite(in the meaning of Gromov-Hausdorff convergence), the annulus $\Omega_a$ will be a cylinder, which means the capacity of $\Omega_a$ will vanish and contradicts to the hyperbolic assumption $a>0$. To use Gromov-Hausdorff convergence theory, we first consider the (intrinsic)flat case in section \ref{flat case} and then,in section \ref{general case}, use the $L^{\infty}$-estimate of new radius in \cite{Sun-Zhou} to reduce the general (extrinsic)case to the flat case.

\section{Preliminaries}
\subsection{Hardy estimate of Gauss curvature}
   We begin with M\"{u}ller and  \v{S}ver\'{a}k's Hardy-estimate\cite{MS} for the curvature form of a surface $\Sigma$ immersed in $\mathbb{R}^n$ with total curvature $\int_{\Sigma}{|A|}^2d\mu_g$ small. Roughly speaking, for $F:\Sigma\to\mathbb{R}^n$ an immersed Riemannian surface equipped with the induced metric, they consider the Gauss map $G:\Sigma \to \mathbb{CP}^{n-1}$, locally defined by $G(p)=[\frac{e_1(p)+\sqrt{-1}e_2(p)}{2}]$, where $\{e_1(p),e_2(p)\}$ are orthonormal basis of $\Sigma$ at the point p. When noting that the K\"{a}ller form $\omega$ over the complex projection space $\mathbb{CP}^{n-1}$, which has the algebraic structure of determinant when transgressed to the total space $S^{2n-1}$ of the Hopf fibration, is the \lq\lq classification form\rq\rq of Gauss curvature, i.e.
   $$G^{*}\omega=Kd\mu_g=K\omega^1\wedge\omega^2,$$
   where $Kd\mu_g$ is the Gauss curvature form of $\Sigma$, they can estimate the Hardy norm of the curvature form using the results of Coifman, Meyer, Lions and Semmes\cite{CLMS}(see also \cite{M}). Moreover, when noting that in dimension two, the fundamental solution belongs to $BMO$, the dual of Hardy space (Fefferman and Stein \cite{FS}), one can solve the Dirichlet problem with the curvature form as density over the whole complex plane $\mathbb{C}$. More precisely, they have the following theorem:
   \begin{thm}\label{MSFS}
      For $0<\varepsilon<1$. Assume $\varphi\in W_0^{1,2}(\mathbb{C},\mathbb{CP}^{n})$ satisfies  $\int_{\mathbb{C}}\varphi^*\omega=0$ and that $\int_{\mathbb{C}}|D\varphi\wedge D\varphi|\le2\pi\varepsilon$. Then $*\varphi^*\omega\in\mathcal{H}^1(\mathbb{C})$ with $\|\varphi^*\omega\|_{\mathcal{H}^1}\le c_1C(n,\varepsilon)\|D\varphi\|_{L^2}^2$, where $C(n,\varepsilon)=1+\frac{4n^2(1-{\varepsilon}^{\frac{1}{n}})}{(1-\varepsilon)^2}$. Moreover, the equation $\triangle u=*\varphi^*\omega$ admits a unique solution $u_0:\mathbb{C}\to\mathbb{R}$ which is continuous and satisfies:
     $$\mathop{\mathrm{lim}}_{z\to \infty}u_0(z)=0,$$
     and
     $$\int_{\mathbb{C}}|D^2u_0|+\{{\int_{\mathbb{C}}{|Du_0|}^2}\}^{\frac{1}{2}}+\mathop{\mathrm{max}}_{z\in \mathbb{C}}|u_0|(z)\le c_2\|\varphi^*\omega\|_{\mathcal{H}^1} \le c_3 C(n,\varepsilon)\|D\varphi\|_{L^2}^2,$$
     where $c_1, c_2$ and $c_3$ are constants independent of $n$ and $\varepsilon$, which will be denoted as a same notation $c$ in the following text.
   \end{thm}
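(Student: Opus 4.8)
The plan is to prove the two assertions in turn; the statement is M\"{u}ller and \v{S}ver\'{a}k's estimate \cite{MS}, and I would follow their scheme. Write $f:=*\varphi^*\omega$. First I would show $f\in\mathcal{H}^1(\mathbb{C})$ with $\|f\|_{\mathcal{H}^1}\le c\,C(n,\varepsilon)\|D\varphi\|_{L^2}^2$ by exploiting that $\varphi^*\omega$ is a Jacobian-type (div--curl) quantity and invoking the compensated-compactness theorem of Coifman--Lions--Meyer--Semmes \cite{CLMS}; then I would take $u_0$ to be the logarithmic (Newtonian) potential of $f$ and read off the norm estimates, continuity, decay and uniqueness from Calder\'{o}n--Zygmund theory and $\mathcal{H}^1$--$BMO$ duality \cite{FS}.

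For the Hardy bound, since $\mathbb{C}$ is simply connected the map $\varphi:\mathbb{C}\to\mathbb{CP}^{n}$ lifts through the Hopf fibration $\pi:S^{2n+1}\to\mathbb{CP}^{n}$ to $\psi=(\psi_0,\dots,\psi_{n}):\mathbb{C}\to S^{2n+1}\subset\mathbb{C}^{n+1}$ with $\pi\circ\psi=\varphi$; using the smallness $\int_{\mathbb{C}}|D\varphi\wedge D\varphi|\le2\pi\varepsilon$ and the normalization $\int_{\mathbb{C}}\varphi^*\omega=0$ to fix a Coulomb gauge for the vertical $U(1)$-direction, one may take $\psi\in W^{1,2}_{\mathrm{loc}}$ with $\|D\psi\|_{L^2(\mathbb{C})}^2\le C(n,\varepsilon)\,\|D\varphi\|_{L^2(\mathbb{C})}^2$. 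Since $\pi^*\omega=d\alpha$ for the standard contact form $\alpha$ on $S^{2n+1}$, one has $\varphi^*\omega=\psi^*d\alpha=d(\psi^*\alpha)$; writing $\psi_k=a_k+\sqrt{-1}\,b_k$ with $a_k,b_k:\mathbb{C}\to\mathbb{R}$, this gives, up to a universal constant, $*\varphi^*\omega=\sum_{k=0}^{n}\big(\partial_{x}a_k\,\partial_{y}b_k-\partial_{y}a_k\,\partial_{x}b_k\big)$, a finite sum of two-dimensional Jacobians of $W^{1,2}(\mathbb{C})$ functions. Applying the div--curl (Jacobian) estimate of \cite{CLMS} (see also \cite{M}) to each summand and adding up yields $\|f\|_{\mathcal{H}^1}\le c\sum_k\|Da_k\|_{L^2}\|Db_k\|_{L^2}\le c\,\|D\psi\|_{L^2}^2\le c\,C(n,\varepsilon)\|D\varphi\|_{L^2}^2$, and in particular $\int_{\mathbb{C}}f=\int_{\mathbb{C}}\varphi^*\omega=0$.

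For the potential, set $u_0(z):=\frac{1}{2\pi}\int_{\mathbb{C}}\log|z-w|\,f(w)\,dw$, so $\triangle u_0=f$ in $\mathcal{D}'(\mathbb{C})$. The bound on $D^2u_0$ is Calder\'{o}n--Zygmund theory on Hardy space: $D^2u_0=(R_iR_jf)_{i,j}$ with $R_i$ the Riesz transforms, which are bounded on $\mathcal{H}^1$, so $\int_{\mathbb{C}}|D^2u_0|\le\|D^2u_0\|_{\mathcal{H}^1}\le c\,\|f\|_{\mathcal{H}^1}$. The sup bound uses $\mathcal{H}^1$--$BMO$ duality: every $\mathcal{H}^1$ function has zero mean and $\log|\cdot|\in BMO(\mathbb{R}^2)$ with translation-invariant norm, so $\max_{z}|u_0(z)|=\tfrac{1}{2\pi}\sup_z\big|\langle\log|z-\cdot|,f\rangle\big|\le c\,\|\log|\cdot|\|_{BMO}\,\|f\|_{\mathcal{H}^1}\le c\,\|f\|_{\mathcal{H}^1}$. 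For the gradient, integrating by parts (the boundary integrals over $\partial B_R$ vanish along a sequence $R_k\to\infty$ because $u_0$ is bounded, tends to $0$ at infinity, and $Du_0\in L^2$) gives $\int_{\mathbb{C}}|Du_0|^2=-\int_{\mathbb{C}}u_0\,\triangle u_0=-\int_{\mathbb{C}}u_0\,f\le\|u_0\|_{L^\infty}\|f\|_{L^1}\le c\,\|f\|_{\mathcal{H}^1}^2$. Finally, $u_0\in W^{2,1}_{\mathrm{loc}}$ together with the borderline embedding $W^{2,1}(\mathbb{R}^2)\hookrightarrow C^0$ (via $u_0(x,y)=\int_{-\infty}^{x}\!\int_{-\infty}^{y}\partial_s\partial_t u_0\,dt\,ds$ on compactly supported data plus approximation) gives continuity; $\lim_{z\to\infty}u_0(z)=0$ follows from $\int_{\mathbb{C}}f=0$ and standard estimates for the logarithmic potential of a mean-zero $\mathcal{H}^1$ function; and uniqueness is immediate, since the difference of two admissible solutions is a bounded harmonic function on $\mathbb{C}$ vanishing at infinity.

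The step I expect to be the main obstacle is producing the $W^{1,2}$ lift $\psi$ of $\varphi$ with Dirichlet energy controlled by that of $\varphi$, with the precise constant $C(n,\varepsilon)=1+\frac{4n^2(1-\varepsilon^{1/n})}{(1-\varepsilon)^2}$: this forces one to fix a good (Coulomb-type) gauge for the vertical direction of the Hopf bundle and to estimate the resulting phase, and it is exactly here that the smallness $\int_{\mathbb{C}}|D\varphi\wedge D\varphi|\le2\pi\varepsilon$ and the normalization $\int_{\mathbb{C}}\varphi^*\omega=0$ are used; the explicit shape of $C(n,\varepsilon)$ quantifies how much of $\mathbb{CP}^n$ the map may cover under the energy constraint. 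Everything else --- $\mathcal{H}^1$-boundedness of Calder\'{o}n--Zygmund operators, $\mathcal{H}^1$--$BMO$ duality, and the borderline Sobolev embedding --- is classical, and the combination is exactly what \cite{MS} carry out.
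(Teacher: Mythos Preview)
The paper does not give its own proof of this theorem: it is stated in the Preliminaries as a quoted result of M\"{u}ller and \v{S}ver\'{a}k \cite{MS} (with the supporting references \cite{CLMS}, \cite{M}, \cite{FS}) and is used later as a black box. Your sketch is precisely the original M\"{u}ller--\v{S}ver\'{a}k scheme --- Hopf lift to $S^{2n+1}$, recognition of $*\varphi^*\omega$ as a finite sum of $2\times2$ Jacobians, the CLMS div--curl estimate for the $\mathcal{H}^1$ bound, then the Newtonian potential together with Calder\'on--Zygmund on $\mathcal{H}^1$ and $\mathcal{H}^1$--$BMO$ duality for the $D^2$, $L^\infty$ and $L^2$ gradient bounds --- so there is nothing to compare: what you wrote \emph{is} the paper's cited proof, and your identification of the energy-controlled lift (the Coulomb-gauge step producing the constant $C(n,\varepsilon)$) as the only nontrivial point is accurate.
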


   \subsection{Capacity}\label{section:Capacity}
   \begin{defn}[Capacity]
  Assume $\Omega$ is a domain in $\mathbb{C}$
  \ with smooth boundary $\Gamma_1 \ and\ \Gamma_2$ and $g$ is a Riemannian metric on $\Omega$. Then the \textit{capacity} of $(\Omega,\Gamma_1,\Gamma_2)$ for the conformal class $[g]$ is defined by $$Cap_{[g]}(\Omega)=\mathop{\textrm{inf}}_{u|_{\Gamma_1}=1,\and u|_{\Gamma_2}=0}\{\int_{\Omega}|\nabla_{g}u|^2d\mu_g|\ u\in W^{1,2}(\Omega)\}$$\\
  \end{defn}
  \begin{rmk}
   Capacity does not depend on the choice of metrics in a fixed conformal class. So, we will  always take the canonical metric $g_0=|dz|^2$ on $\Omega$ and define $Cap(\Omega)=Cap_{[g_0]}(\Omega)$.
  \end{rmk}
  \begin{lemma}
     The capacity $Cap(\Omega)$ $($i.e. the minimum of the functional $I(u)=\int_{\Omega}{|\nabla u|}^2$ on $X=\{u\in W^{1,2}(\Omega)| u=\chi_{\Gamma_1}\  \mathrm{on}\ \partial\Omega=\Gamma_1\cup \Gamma_2\}$ $)$ is achieved by the unique harmonic function which solve the Dirichlet problem $\triangle u=0$ in $\Omega$, $u=1$ on $\Gamma_1$ and $u=0$ on $\Gamma_2$.
\end{lemma}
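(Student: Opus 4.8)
The plan is to run the direct method in the calculus of variations on the affine set $X$ and then identify the Euler--Lagrange equation. First I would check $X\neq\varnothing$: since $\Gamma_1$ and $\Gamma_2$ are disjoint smooth components of $\partial\Omega$, choose disjoint collar neighbourhoods and build a smooth $u_0$ on $\overline\Omega$ with $u_0\equiv 1$ near $\Gamma_1$ and $u_0\equiv 0$ near $\Gamma_2$; then $u_0\in X$, so $m:=\inf_X I\in[0,\infty)$. Because $\Omega$ is a bounded doubly connected domain, the Poincar\'e inequality applies to $u-u_0\in W^{1,2}_0(\Omega)$, so any minimizing sequence $\{u_k\}\subset X$ is bounded in $W^{1,2}(\Omega)$; passing to a subsequence, $u_k\rightharpoonup u$ weakly in $W^{1,2}(\Omega)$. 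The set $X$ is convex and strongly closed (the trace map $W^{1,2}(\Omega)\to L^2(\partial\Omega)$ is bounded because $\partial\Omega$ is smooth, so the boundary constraints pass to strong limits), hence weakly closed, giving $u\in X$; and $I$ is convex and strongly continuous, hence weakly lower semicontinuous, so $I(u)\le\liminf_k I(u_k)=m$. Thus $u$ is a minimizer.

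Next I would show $u$ is harmonic. For $\varphi\in C_c^\infty(\Omega)$ and $t\in\mathbb{R}$ one has $u+t\varphi\in X$, so $t\mapsto I(u+t\varphi)=I(u)+2t\int_\Omega\nabla u\cdot\nabla\varphi+t^2\int_\Omega|\nabla\varphi|^2$ is minimized at $t=0$; differentiating at $t=0$ gives $\int_\Omega\nabla u\cdot\nabla\varphi=0$ for all such $\varphi$, i.e. $u$ is weakly harmonic. By Weyl's lemma $u\in C^\infty(\Omega)$ with $\triangle u=0$ in $\Omega$, and by elliptic boundary regularity (smooth boundary) together with the trace identities $u=1$ on $\Gamma_1$, $u=0$ on $\Gamma_2$; hence $u$ solves the stated Dirichlet problem.

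For uniqueness I would use strict convexity along the affine space. If $u_1,u_2$ are two minimizers, then $\tfrac12(u_1+u_2)\in X$ and the parallelogram identity gives
$$I\!\left(\tfrac12(u_1+u_2)\right)=\tfrac12 I(u_1)+\tfrac12 I(u_2)-\tfrac14\int_\Omega|\nabla(u_1-u_2)|^2=m-\tfrac14\int_\Omega|\nabla(u_1-u_2)|^2,$$
which forces $\nabla(u_1-u_2)\equiv 0$; as $\Omega$ is connected $u_1-u_2$ is constant, and the constant vanishes since $u_1=u_2=0$ on $\Gamma_2$. The same energy computation (or the maximum principle) shows the Dirichlet problem has at most one solution, so the minimizer and the harmonic function coincide, proving the two descriptions agree.

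I do not expect a serious obstacle: this is essentially a textbook argument. The points that need genuine care are (i) nonemptiness of $X$, which uses the disjointness and smoothness of $\Gamma_1,\Gamma_2$; (ii) stability of the affine trace constraint under weak $W^{1,2}$ convergence, where the smooth-boundary hypothesis enters through the continuity of the trace operator; and (iii) promoting the weak solution to an honest harmonic function attaining its boundary data, i.e. Weyl's lemma plus boundary regularity. If $\Omega$ were unbounded one would also need a finite-energy or decay normalization to keep $I$ coercive, but in the cases of interest ($\Omega=\Omega_a$ an annulus) $\Omega$ is bounded and this does not arise.
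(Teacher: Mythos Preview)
Your argument is correct and is precisely the ``classical variational argument'' the paper invokes without details: direct method on the affine class $X$, Euler--Lagrange giving weak harmonicity, Weyl's lemma for regularity, and strict convexity for uniqueness. There is nothing to add; the paper's own proof is just the one-line reference to this standard reasoning.
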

\begin{proof}
By classical variational argument.
 \end{proof}

\begin{exam}
  Define $\Omega_a=\overline{D}\setminus\overline{D}_a$,$a\in (0,1)$ where $\overline{D}_a=\{
  z\in\mathbb{C}|0\leq z\leq a\}$ and $\overline{D}=\{z\in\mathbb{C}|0\leq z\leq 1\}$. Then $Cap(\Omega_a)=-2\pi \log a$.
\end{exam}
\begin{proof}
  W.L.O.G.,choose the canonical metric $g=|dz|^2$ on $\Omega_a$ and solve the the $Laplace$ equation $\triangle u=0$ in $\Omega_a$, $u=0$ on $\Gamma_2=\{z||z|=1\}$, $u=0$ on $\Gamma_1=\Gamma_a=\{z||z|=a\}$
    we get$$
    u_0(x,y)=\frac{1}{\log a}\log(\sqrt{x^2+y^2}),\forall (x,y)\in \Omega_a$$
    hence $Cap(\Omega_a)=\int_{a\leq |z|\leq 1}|\nabla u_0|^2dx\wedge dy=-2\pi /\log a>0.$
\end{proof}

 \section{Convergence view of $Huber$'s theorem }
  \subsection{Flat case}\label{flat case}
   In this section, we define $\Omega_{(a,b]}=\{z\in \mathbb{C}|0<a<|z|\le b\}$, $\Omega_a=\Omega_{(a,1]}$ and use
   $$
     H:=\{\gamma:\rm{S^1} \rightarrow \Omega_a | \gamma \; piecewise \; smooth \; with \; [\gamma]\neq 0\}
  $$
  to denote the set of piecewise smooth homotopic nontrivial closed curves in $\Omega_a$.
  Assume $g$ is a Riemannian metric on $\Omega_a$, we define the width of $(\Omega_a, g)$ by
  $$l_0(g):=\inf_{\gamma\in H}L_g(\gamma),$$
  where $L_g$ is the length functional.
 The goal is to prove the following intrinsic theorem of the nonexistence of flat complete metric with positive width on an annulus.
    \begin{thm} \label{nonexistence of flat complete metric}
    Any flat metric with positive width on the annulus $\Omega_a$ can not be complete.
    \end{thm}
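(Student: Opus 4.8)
The plan is to argue by contradiction: suppose $g$ is a flat complete metric on $\Omega_a$ with $l_0(g) = \delta > 0$. Since $g$ is flat, we can pass to the universal cover, or more directly use the classical classification of flat surfaces: a flat metric on an annulus is, up to conformal (indeed isometric) identification, one of the standard models — a finite flat cylinder, a half-infinite flat cylinder, a bi-infinite flat cylinder, or a flat cone/cusp. Completeness rules out the finite cylinder and forces, near each end, either a half-infinite cylinder of some circumference or a cusp (circumference $\to 0$). The positive-width hypothesis $l_0(g) \ge \delta$ says that \emph{every} homotopically nontrivial loop has $g$-length at least $\delta$; in particular a cusp end (on which the systole shrinks to zero) is excluded. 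Hence near the inner boundary $\{|z| = a\}$ the metric must be asymptotically a half-infinite flat cylinder of some fixed circumference $\ell \ge \delta$.

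The next step is to extract a Gromov-Hausdorff limit "from infinity." Concretely, fix the homotopy class and let $p_k \in \Omega_a$ be a sequence of points escaping to the end (i.e. $|z(p_k)| \to a$), and consider the pointed spaces $(\Omega_a, g, p_k)$. Because the width is bounded below and the metric is flat and complete near the end, the pointed balls $B_{g}(p_k, R)$ have uniformly bounded geometry (flat, injectivity radius bounded below by a multiple of $\delta$), so by Gromov's compactness theorem a subsequence converges in the pointed Gromov-Hausdorff sense to a complete flat pointed surface $(\Sigma_\infty, g_\infty, p_\infty)$ carrying a nontrivial loop of length $\ge \delta$. I would then identify $\Sigma_\infty$: being flat, complete, noncompact with one "thin" direction whose systole is pinned between $\delta$ and the finite circumference $\ell$ of the end, it must be the bi-infinite flat cylinder $S^1_\ell \times \mathbb{R}$.

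Finally I would turn this limit into a capacity/conformal contradiction. The bi-infinite flat cylinder has zero capacity between its two ends (equivalently, it is conformally $\mathbb{C} \setminus \{0\}$, for which $Cap = 0$ — take the $a \to 0$ limit in the Example). Quantitatively: for $N$ large, the subannulus $\Omega_{(a, \,a')} \subset \Omega_a$ corresponding to a long $g$-cylindrical piece of $g$-length $\approx N$ and circumference $\approx \ell$ contributes capacity at most $\sim \ell / N \to 0$ as $N \to \infty$ (this is the flat-cylinder computation, and capacity is conformally invariant so it may be computed in the model). But the nested annuli $\Omega_{(a,a')}$ separate the two boundary components of $\Omega_a$, and capacity is monotone: $Cap(\Omega_a) \le Cap(\Omega_{(a,a')}) \to 0$, forcing $Cap(\Omega_a) = 0$. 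This contradicts the Example, which gives $Cap(\Omega_a) = -2\pi \log a > 0$ for $a \in (0,1)$. Hence no such complete flat metric exists.

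The main obstacle is the middle step — making the Gromov-Hausdorff extraction and, more importantly, the \emph{identification} of the limit rigorous while keeping track of the conformal (not just metric) structure, since the final contradiction is conformal in nature. One has to be careful that the rescaling-free limit "from infinity" really records the conformal type of the end of $\Omega_a$ and that the width lower bound genuinely survives in the limit (no loss of the nontrivial homotopy class in the Gromov-Hausdorff passage); quantifying the capacity estimate directly on long cylindrical necks of $(\Omega_a,g)$, rather than only in the abstract limit, is the clean way to sidestep this and is the route I would ultimately take.
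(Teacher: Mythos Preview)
Your overall architecture---assume a flat complete metric of positive width exists on $\Omega_a$, pass to a pointed Gromov--Hausdorff limit ``from infinity'' that is a flat cylinder, then derive $Cap(\Omega_a)=0$---does match the paper's case~(b). But there is a genuine gap, and it sits exactly where you suspected, in the classification step.

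Your assertion that ``a flat metric on an annulus is, up to isometric identification, one of the standard models'' and hence that the end near $\Gamma_a$ is asymptotically a half-infinite cylinder of \emph{fixed} circumference $\ell$ is not justified. The holonomy of a flat structure on an annulus around the generator of $\pi_1$ is an arbitrary orientation-preserving rigid motion of $\mathbb{R}^2$, hence either a translation or a rotation. Only the translation case gives a cylinder. In the rotation case the end can open up like a cone: the length of the shortest nontrivial loop through $p$ tends to $+\infty$ as $p\to\Gamma_a$, the width hypothesis $l_0>0$ is still satisfied (it only bounds lengths from below), the injectivity radius goes to infinity, and the pointed GH limit you extract is $\mathbb{R}^2$, not $S^1\times\mathbb{R}$. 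Your cylinder identification then fails. There is also a circularity warning: if the end really were a half-infinite flat cylinder of fixed circumference, that end would already be conformally a punctured disk, immediately contradicting $a>0$ with no GH limit or capacity needed---so the classification step, if it were valid, would be doing the entire proof by itself.

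The paper confronts exactly this missing case. It first replaces $g$ by the conformal modification $\tilde g(z)=|z|^{2\lambda}g(z)$ (still flat and complete, still positive width) and, for $\lambda$ large, proves a dichotomy (Lemma~\ref{positive injective radius}): either (a) $\mathrm{inj}_{\tilde g}(q)\to\infty$ for every $q\to\Gamma_a$, in which case a length-minimizing sequence in $H$ stays in a compact set and Cartan's theorem yields an honest closed geodesic $\gamma$, whose Fermi tube has linear $\tilde g$-area and gives $Cap(\Omega_a)=0$ directly; or (b) there exist $p_k\to\Gamma_a$ with $\mathrm{inj}_{\tilde g}(p_k)\to\tfrac12\tilde l_0$, which supplies both the lower \emph{and} upper injectivity-radius control needed to identify the GH limit as $S^1\times\mathbb{R}$ and to transplant test functions back to $\Omega_a$. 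Your proposal is essentially case~(b) only; the $|z|^{2\lambda}$ trick and the closed-geodesic branch~(a) are the missing ingredients.
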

   The whole plan is a contradiction argument, so we begin with the assumption that $g=e^{2u}|dz|^2$ is a flat complete metric on $\Omega_a$ with positive width.
  \begin{lemma}\label{lemma 2.1}
    If $g$ is a complete metric on $\Omega_a$, then the volume of $\Omega_a$ under the metric $g$ is $\mu_g(\Omega_a)=+\infty$.
  \end{lemma}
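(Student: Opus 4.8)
The plan is to argue, consistently with the standing assumption of this section, with a conformal metric $g=e^{2u}|dz|^2$ on $\Omega_a=\{z\in\mathbb{C}\mid a<|z|\le 1\}$ with $a>0$, and to show that even a single fixed sub-annulus $\{a<|z|\le r_0\}$ (any $a<r_0<1$) already has infinite $g$-volume. Two ingredients will do it. First, completeness should force every radial segment $\sigma_\theta\colon(a,r_0]\to\Omega_a$, $r\mapsto re^{i\theta}$, to have infinite $g$-length, i.e. $\int_a^{r_0}e^{u(re^{i\theta})}\,dr=+\infty$ for every $\theta$. Second, a one-dimensional Cauchy--Schwarz inequality, used in the direction that turns divergence of a length integral into divergence of the corresponding area integral, will upgrade this to $\int_a^{r_0}e^{2u(re^{i\theta})}r\,dr=+\infty$; here the hypothesis $a>0$ enters crucially, since it makes the auxiliary weight $\int_a^{r_0}r^{-1}\,dr=\log(r_0/a)$ finite. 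Integrating the latter divergence over $\theta\in[0,2\pi)$ via Tonelli then yields $\mu_g(\Omega_a)=+\infty$.

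For the first ingredient I would argue by contradiction: if $\int_a^{r_0}e^{u(re^{i\theta})}\,dr<\infty$ for some $\theta$, then the points $p_k:=\sigma_\theta(a+\tfrac1k)$ form a $d_g$-Cauchy sequence, because $d_g(p_k,p_l)$ is bounded by the $g$-length of the radial arc joining them, which is a tail of a convergent integral and hence tends to $0$. Completeness would then give a limit $p\in\Omega_a$ in $(\Omega_a,d_g)$, hence in $\Omega_a$ with its Euclidean subspace topology (the length-metric topology of a Riemannian manifold, with or without boundary, coincides with the manifold topology); but in $\mathbb{C}$ one has $p_k\to a e^{i\theta}\notin\Omega_a$, a contradiction. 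For the second ingredient, for each $\theta$ I would write $\int_a^{r_0}e^{u}\,dr=\int_a^{r_0}(e^{u}r^{1/2})\cdot r^{-1/2}\,dr\le\big(\int_a^{r_0}e^{2u}r\,dr\big)^{1/2}\big(\int_a^{r_0}r^{-1}\,dr\big)^{1/2}$; the left side being $+\infty$ and the last factor finite (because $a>0$) forces $\int_a^{r_0}e^{2u(re^{i\theta})}r\,dr=+\infty$, and then $\mu_g(\Omega_a)\ge\int_0^{2\pi}\!\int_a^{r_0}e^{2u(re^{i\theta})}r\,dr\,d\theta=+\infty$.

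I expect the only delicate points to be bookkeeping ones rather than conceptual: making the identification of the $d_g$-topology with the ambient Euclidean topology (so the Cauchy limit $p$ is forced to equal $ae^{i\theta}$), and keeping track of where $a>0$ is used — indeed for $a=0$ the assertion is false, the standard hyperbolic cusp $e^{2u}=|z|^{-2}(\log|z|)^{-2}$ on the punctured disc being complete of finite volume, so the argument cannot be purely soft. If one prefers not to assume $g$ is presented in conformal form, one can first introduce isothermal coordinates on the open annulus and carry completeness across; a coordinate-free alternative would use that the distance to the outer boundary is unbounded (Hopf--Rinow) together with the coarea formula, but that would need a lower bound on the lengths of its level curves — effectively the positive-width hypothesis — whereas the Cauchy--Schwarz route above avoids it.
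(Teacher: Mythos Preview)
Your argument is correct, but it proceeds along a quite different line from the paper's.

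The paper argues by contradiction through the \emph{capacity}. Assuming $\mu_g(\Omega_a)<\infty$, it builds a test function $\eta^A=\beta\circ d_g(\,\cdot\,,\gamma)$ from the $g$-distance to a fixed homotopically nontrivial curve $\gamma$, where $\beta$ is a cutoff with $|\beta'|\le 2/A$; completeness forces $\eta^A\equiv 1$ near $\Gamma_a$, so $\eta^A$ is admissible for the capacity and
\[
Cap(\Omega_a)\ \le\ \int_{\Omega_a}|\nabla_g\eta^A|^2\,d\mu_g\ \le\ \frac{4}{A^2}\,\mu_g(\Omega_a)\ \xrightarrow[A\to\infty]{}\ 0,
\]
contradicting $Cap(\Omega_a)=-2\pi/\log a>0$. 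Your proof instead works directly in the conformal coordinate: completeness gives infinite $g$-length for every radial ray, and a one-dimensional Cauchy--Schwarz (with the finite weight $\int_a^{r_0}r^{-1}\,dr=\log(r_0/a)$) upgrades this to infinite area along every ray, hence infinite total volume by Tonelli. Both arguments hinge on $a>0$ at exactly one place --- for you the finiteness of $\log(r_0/a)$, for the paper the nonvanishing of $Cap(\Omega_a)$ --- and your cusp example correctly shows this is sharp. Your route is more elementary and self-contained; the paper's route has the advantage of introducing the capacity/test-function machinery that is reused verbatim in the proof of Theorem~\ref{nonexistence of flat complete metric}, so in context it is not a detour.
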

  \begin{proof}
    If not, i.e. $\mu_g(\Omega_a)<+\infty$. Take a homotopic nontrivial simple closed curve $\gamma :\rm{S^1}\rightarrow \Omega_a$ and let
    $$
    \eta(x)=
    \begin{cases}
      d_g(x,\gamma),&x \;\textrm{between}\; \Gamma_a\; \textrm{and}\; \gamma(\textrm{denote as}\; x \in \Omega_a^{\gamma}),\\
      0,&x\;\textrm{between}\; \gamma\; \textrm{and}\; \Gamma_1(\textrm{denote as}\; x \notin \Omega_a^{\gamma}).\\
    \end{cases}
    $$
especially, when choose $\gamma$ as an embedded closed curve in $\Omega$,
 then $\eta$ is a distance function in $\Omega_a^{\gamma}$, i.e. ${|\nabla_g\eta|}_g=1$ a.e. in $\Omega_a^{\gamma}$.

    Notice that for a cut function $\beta \in C^{\infty}(\mathbb{R})$ s.t.
    $$
    \beta=
    \begin{cases}
      0,& \textrm{on}\;(-\infty,\varepsilon),\\
      1,&\textrm{on}\;(A-\varepsilon,+\infty).
    \end{cases}
    $$
    and
    $$|\beta^\prime(t)|\leq \frac{2}{A}$$
    let $\eta^A=\beta\circ\eta$. Then
    $$
    |\nabla_g\eta^A(x)|=|\beta^\prime(\eta(x))||(\nabla_g\eta)(x)|\leq \frac{2}{A}.
    $$
    Observe $\eta^A|_{\Gamma_1}=0$ and $g$ complete guarantees $\mathop{\rm limit}_{x \to \Gamma_a}{\eta(x)}=+\infty$ which implies $\eta^A|_{\Gamma_a}=1$ for any fixed $A\in \mathbb{R}$.By the definition of capacity we know
    \begin{align*}
    Cap(\Omega_a)\leq \int_{\Omega_a}{|\nabla_g\eta^A|}^2d\mu_g\leq \int_{\Omega_a}{|\frac{2}{A}|}^2d\mu_g=\frac{4}{A^2}\mu_g(\Omega_a)\rightarrow 0
  \end{align*}
\noindent as $A\rightarrow+\infty$ since $\mu_(\Omega_a)<+\infty$.  So we get $Cap(\Omega_a)=0$ which contradicts to the fact $Cap(\Omega_a)=-2\pi /\log a$.
  \end{proof}
  Now, consider the conformal metric $\tilde{g}(z)=g(z){|z|}^{2\lambda}$, then $\tilde{g}$ is also a flat complete metric on $\Omega_a$ since ${|z|}^{2\lambda}\ge{|a|}^{2\lambda}$ and $\lambda\log|z|$ is harmonic in $\Omega_a$.
Furthermore, let $L_{\tilde{g}}(\gamma)=\int_0^1{|\dot{\gamma}(t)|}_{\tilde{g}}\mathrm{d}t$ be the length functional corresponding to the metric $\tilde{g}$, then
  $$
  \tilde{l}_0:=\mathop{\rm inf}\limits_{\gamma\in H}L_{\tilde{g}}(\gamma)\ge {|a|}^{2\lambda}\mathop{\rm inf}\limits_{\gamma\in H}L_{g}(\gamma)={|a|}^{2\lambda}l_0>0
  $$
  and
  \begin{align*}
    \tilde{l}_{1-\varepsilon}:=L_{\tilde{g}}(\Gamma_{1-\varepsilon})
    =\int_0^{2\pi}{(1-\varepsilon)}^{2\lambda}{|\dot{\gamma}|}_{g}\mathrm{d}t
    ={(1-\varepsilon)}^{2\lambda}l_{1-\varepsilon}
  \end{align*}
  where $l_{1-\varepsilon}:=L_g(\Gamma_{1-\varepsilon})$. So we have
  \begin{equation*}
    0<\tilde{l}_0<{(1-\varepsilon)}^{2\lambda}l_{1-\varepsilon} \tag{$\ast$}
  \end{equation*}

  Now, take a minimal sequence $(\gamma_k,p_k)$ of $L_{\tilde{g}}$ such that
  \begin{equation*}
    \left\{
    \begin{aligned}
      \left[\gamma_k\right] &\neq 0, p_k\in \mathrm{Im}\gamma_k\\
      \lim\limits_{k \to \infty} L_{\tilde{g}}(\gamma_k)&=\mathop{\rm inf}\limits_{\gamma\in H}L_{\tilde{g}}(\gamma)=\tilde{l}_0
    \end{aligned}
    \right.
  \end{equation*}
  and we get the following two lemmas.
  \begin{lemma}
    Take notations as above, then $\exists M$ large enough such that for $\lambda>M$,
    \begin{equation*}
      p_k \nrightarrow \Gamma_1
    \end{equation*}
  \end{lemma}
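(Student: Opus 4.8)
\noindent\emph{Proof idea.} The plan is a contradiction argument that exploits how the conformal weight $|z|^{2\lambda}$ penalizes loops that sit near the outer boundary $\Gamma_1$, while the comparison $(\ast)$ keeps $\tilde l_0$ comparatively cheap once $\lambda$ is large. Fix once and for all a collar parameter $\delta\in(0,\varepsilon)$, so that $1-\delta>1-\varepsilon>a$, and set
$$
m_\delta:=\min_{\{1-\delta\le |z|\le 1\}}e^{u}>0,\qquad c_\delta:=\min\Big\{2\pi(1-\delta),\ \tfrac{\delta}{2}\Big\}.
$$
Here $m_\delta>0$ because $\{1-\delta\le|z|\le 1\}$ is a compact subset of $\Omega_a$ on which $g=e^{2u}|dz|^2$ is a smooth positive metric; note that $\delta,\varepsilon,m_\delta,c_\delta$ and $l_{1-\varepsilon}$ are all independent of $\lambda$.

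The first step I would carry out is a uniform lower bound: \emph{if $\gamma$ is any piecewise smooth homotopically nontrivial loop in $\Omega_a$ that passes through some point of radius $>1-\tfrac{\delta}{2}$, then $L_{\tilde g}(\gamma)\ge (1-\delta)^{2\lambda}m_\delta c_\delta$.} To prove this, let $\sigma$ be the connected sub-arc of $\gamma\cap\{|z|\ge 1-\delta\}$ containing that point. On $\sigma$ one has $|z|^{2\lambda}\ge(1-\delta)^{2\lambda}$ and, since the $g$-arclength element is $e^{u}|dz|\ge m_\delta|dz|$, we get $L_{\tilde g}(\gamma)\ge L_{\tilde g}(\sigma)\ge (1-\delta)^{2\lambda}m_\delta\,L_{g_0}(\sigma)$, so it remains to show $L_{g_0}(\sigma)\ge c_\delta$. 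There are two cases. If $\sigma=\gamma$, then $\gamma$ is a homotopically nontrivial loop inside the annulus $\{1-\delta\le|z|\le1\}$ (if it were trivial there it would be trivial in $\Omega_a$), hence it has nonzero winding number about the origin and therefore Euclidean length at least $2\pi(1-\delta)$. If $\sigma\subsetneq\gamma$, then $\sigma$ runs from the circle $\{|z|=1-\delta\}$ up to a point of radius $>1-\tfrac{\delta}{2}$, so the variation of $|z|$ along $\sigma$ is at least $\tfrac{\delta}{2}$, and the Euclidean length dominates this variation. This dichotomy --- either the loop stays in the thin collar and must wind around it, or it drops out of the collar and thus sweeps radial width at least $\tfrac{\delta}{2}$ --- is the only genuinely geometric input; the rest is bookkeeping of the weights $|z|^{2\lambda}$ and $e^{u}$ on the fixed compact collar.

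For the second step, recall from $(\ast)$ that $\tilde l_0\le \tilde l_{1-\varepsilon}=(1-\varepsilon)^{2\lambda}l_{1-\varepsilon}$. Since $\delta<\varepsilon$ we have $\big((1-\delta)/(1-\varepsilon)\big)^{2\lambda}\to+\infty$, so I would choose $M$, depending only on $\delta,\varepsilon$ and the metric $g$ on the fixed collar, such that $(1-\delta)^{2\lambda}m_\delta c_\delta>(1-\varepsilon)^{2\lambda}l_{1-\varepsilon}$ for all $\lambda>M$. Now fix $\lambda>M$ and suppose, toward a contradiction, that some subsequence of $(p_k)$ converges to $\Gamma_1$; after relabeling we may assume $|p_k|\to 1$ while still $L_{\tilde g}(\gamma_k)\to\tilde l_0$. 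For $k$ large, $|p_k|>1-\tfrac{\delta}{2}$, so the first step applies to $\gamma=\gamma_k$ and gives $L_{\tilde g}(\gamma_k)\ge (1-\delta)^{2\lambda}m_\delta c_\delta$; letting $k\to\infty$ we obtain $\tilde l_0\ge(1-\delta)^{2\lambda}m_\delta c_\delta>(1-\varepsilon)^{2\lambda}l_{1-\varepsilon}$, which contradicts $(\ast)$. Hence no subsequence of $p_k$ converges to $\Gamma_1$, i.e. $p_k\nrightarrow\Gamma_1$.

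The point I expect to need the most care is making the lower bound of the first step uniform in $k$: one cannot reason about the global geometry of $\gamma_k$, which may wander deep into $\Omega_a$ where the weight $|z|^{2\lambda}$ is tiny, so one must localize to the fixed collar $\{1-\delta\le|z|\le1\}$ through the sub-arc $\sigma_k$ singled out by $p_k$. Once this localization is in place, the threshold $M$ is dictated purely by the competition between $(1-\delta)^{2\lambda}$ and $(1-\varepsilon)^{2\lambda}$ and does not depend on the particular minimizing sequence.
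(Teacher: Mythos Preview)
Your argument is correct and follows essentially the same route as the paper's proof: assume $p_k\to\Gamma_1$, split into the two cases where the nearly-minimizing loop either stays in a thin collar near $\Gamma_1$ or exits it, obtain in each case a lower bound of the form $(\text{const})\cdot(1-\delta)^{2\lambda}$ on $L_{\tilde g}(\gamma_k)$, and contradict $(\ast)$ for $\lambda$ large. The only cosmetic difference is that the paper expresses the two lower bounds through the intrinsic quantities $l_0$ and $d_g(\Gamma_{1-\varepsilon/2},\Gamma_1)$, whereas you pass to Euclidean lengths on the collar via $m_\delta=\min e^{u}$ together with a winding-number estimate.
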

  \begin{proof}
    If not, assume $p_k\rightarrow\Gamma_1$, then for $k$ large enough, $p_k\in\Omega_{1-\frac{1}{2}\varepsilon}$.

    If for infinite $k$, $\mathrm{Im}\gamma_k\subseteq\bar{\Omega}_{1-\frac{1}{2}\varepsilon}$, then for these $k$
    \begin{align*}
      l(\gamma_k)=\int_0^{2\pi}{|z|}^{2\lambda}{|\dot{\gamma}_k|}_g\mathrm{d}t
                 \ge{\left(1-\tfrac{1}{2}\varepsilon\right)}^{2\lambda}\int_0^{2\pi}{|\dot{\gamma}_k|}_g\mathrm{d}t
                 \ge{\left(1-\tfrac{1}{2}\varepsilon\right)}^{2\lambda}l_0
    \end{align*}
    let $k\to +\infty$ we get
    \begin{equation*}
      \tilde{l}_0=\lim\limits_{k\to\infty}l(\gamma_k)\ge{\left(1-\tfrac{1}{2}\varepsilon\right)}^{2\lambda}l_0
    \end{equation*}\\
    so
    \begin{align*}
      (\ast)&\Longrightarrow{\left(1-\tfrac{1}{2}\varepsilon\right)}^{2\lambda}l_0\le{(1-\varepsilon)}^{2\lambda}l_{1-\varepsilon}\\
            &\Longrightarrow\lambda\le M_{\varepsilon}:=\frac{\ln (l_{1-\varepsilon}/ l_0)}{2\ln((1-\tfrac{1}{2}\varepsilon)/(1-\varepsilon))}\text{($>0$ for $\varepsilon$ small)}
    \end{align*}

    Otherwise, for infinite $k$, $\mathrm{Im}\gamma_k\cap\Gamma_{1-\frac{1}{2}\varepsilon}\ne\varnothing$, but
    $\gamma_k(0)=p_k\in\Omega_{1-\frac{1}{2}\varepsilon}$, so there exists a first $t_k\in [0,2\pi]$ such that $
    q_k=\gamma_k(t_k)\in\Gamma_{1-\frac{1}{2}\varepsilon}$ and hence
    \begin{align*}
      L_{\tilde{g}}(\gamma_k)&\ge\int_0^{t_k}{|\dot{\gamma}_k|}_{\tilde{g}}\mathrm{d}t
                 \ge\int_0^{t_k}{(1-\tfrac{1}{2}\varepsilon)}^{2\lambda}{|\dot{\gamma}_k|}_g\mathrm{d}t
                 \ge{(1-\tfrac{1}{2}\varepsilon)}^{2\lambda} \mathrm{d}(p_k,q_k)
    \end{align*}
    let $k\to +\infty$£¬and note $p_k\rightarrow\Gamma_1$, we get
    \begin{equation*}
      \tilde{l}_0\ge{(1-\tfrac{1}{2}\varepsilon)}^{2\lambda}\mathrm{d}_{1-\frac{1}{2}\varepsilon}
    \end{equation*}
    where $\mathrm{d}_{1-\frac{1}{2}\varepsilon}:=\mathrm{d}_g(\Gamma_{1-\frac{1}{2}\varepsilon},\Gamma_1)$. Again by $(\ast)$ we get
    \begin{equation*}
      \lambda\le N_{\varepsilon}:=\frac{\ln (l_{1-\varepsilon}/ d_{1-\frac{1}{2}\varepsilon})}{2\ln((1-\tfrac{1}{2}\varepsilon)/(1-\varepsilon))}\text{($>0$ for $\varepsilon$ small)}
    \end{equation*}

    So if we take $M=M_{\varepsilon}+N_{\varepsilon}+1$ which is independent on $\lambda$, then for $\lambda\ge M$,
    \begin{equation*}
      p_k\nrightarrow\Gamma_1
    \end{equation*}
  \end{proof}
  \begin{lemma}\label{realize injective radius}
    Take notations as above, and assume $\forall q_k\rightarrow \Gamma_a$, $\mathop{\mathrm {inj}}_{\tilde{g}}(q_k)\rightarrow+\infty$, then $\exists M$ large enough such that for $\lambda>M$,
    \begin{equation*}
      p_k \nrightarrow \Gamma_a
    \end{equation*}
  \end{lemma}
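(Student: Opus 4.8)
The plan is to argue by contradiction, in close parallel with the previous lemma, but with the injectivity-radius hypothesis near $\Gamma_a$ now taking the place of the width estimate near $\Gamma_1$. First I would suppose that, after passing to a subsequence, $p_k\to\Gamma_a$. Since $(\gamma_k,p_k)$ is a minimizing sequence for $L_{\tilde g}$, the lengths $L_{\tilde g}(\gamma_k)$ converge to $\tilde l_0$, which is finite: the circle $\Gamma_{1-\varepsilon}$ lies in $H$, so $\tilde l_0\le L_{\tilde g}(\Gamma_{1-\varepsilon})=(1-\varepsilon)^{2\lambda}l_{1-\varepsilon}<+\infty$, as already noted in $(\ast)$. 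In particular there is a constant $\Lambda$ with $L_{\tilde g}(\gamma_k)\le\Lambda$ for all large $k$.

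The geometric heart of the matter is the elementary fact that on a complete surface every piecewise smooth noncontractible loop $\gamma$ based at a point $q$ satisfies $L_{\tilde g}(\gamma)\ge 2\,\mathrm{inj}_{\tilde g}(q)$. Indeed, reparametrize $\gamma$ by arclength on $[0,L]$ with $\gamma(0)=\gamma(L)=q$; if one had $L<2\,\mathrm{inj}_{\tilde g}(q)$ then $d_{\tilde g}(q,\gamma(t))\le\min(t,L-t)<\mathrm{inj}_{\tilde g}(q)$ for all $t$, so each $\gamma(t)$ is joined to $q$ by a unique minimizing geodesic depending continuously on $t$, and $H(s,t)=\exp_q\!\big(s\,\exp_q^{-1}\gamma(t)\big)$ would be a homotopy rel endpoints contracting $\gamma$, contradicting $[\gamma]\ne 0$. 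Since $p_k\in\mathrm{Im}\,\gamma_k$, I would cyclically reparametrize $\gamma_k$ to a loop based at $p_k$ — changing neither its length nor its class in $\pi_1(\Omega_a)$ — and conclude $L_{\tilde g}(\gamma_k)\ge 2\,\mathrm{inj}_{\tilde g}(p_k)$.

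It remains to feed in the hypothesis. As $p_k\to\Gamma_a$, it gives $\mathrm{inj}_{\tilde g}(p_k)\to+\infty$, whence $L_{\tilde g}(\gamma_k)\to+\infty$, contradicting $L_{\tilde g}(\gamma_k)\to\tilde l_0<+\infty$. Hence no subsequence of $(p_k)$ converges to $\Gamma_a$, i.e. $p_k\nrightarrow\Gamma_a$; since this argument does not use the size of $\lambda$, one may take the same $M$ as in the preceding lemma. I expect the only points needing care to be bookkeeping: that the loop-length/injectivity-radius fact is applied on a region genuinely free of the finite boundary $\Gamma_1$ — which is automatic here, since $p_k$ runs off into the infinite end $\Gamma_a$ while the loops $\gamma_k$ stay within the bounded distance $\tfrac12\Lambda$ of $p_k$ — and that piecewise (not full) smoothness of $\gamma_k$ is enough, as only continuity of $t\mapsto\exp_{p_k}^{-1}\gamma_k(t)$ is used to build the homotopy.
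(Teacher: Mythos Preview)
Your argument is correct and noticeably more direct than the paper's. Both proceed by contradiction, assuming $p_k\to\Gamma_a$, but you simply invoke the standard fact that a noncontractible loop through $q$ has $\tilde g$-length at least $2\,\mathrm{inj}_{\tilde g}(q)$; since $L_{\tilde g}(\gamma_k)\to\tilde l_0<\infty$ while by hypothesis $\mathrm{inj}_{\tilde g}(p_k)\to\infty$, the contradiction is immediate, and indeed no constraint on $\lambda$ is needed. The paper instead works harder: it performs an explicit curve-surgery (replacing any excursion of a based loop toward $\Gamma_1$ by an arc of $\Gamma_{1-\varepsilon}$, which is strictly shorter precisely when $\lambda>N_\varepsilon$) to show that minimizing $L_{\tilde g}$ over loops based at $p_k$ may be restricted to curves avoiding $\Gamma_1$, and hence yields a genuine geodesic loop $\tilde\gamma_k$ with $2\,\mathrm{inj}_{\tilde g}(p_k)=L_{\tilde g}(\tilde\gamma_k)\le L_{\tilde g}(\gamma_k)$. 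What the paper's longer route buys is the by-product recorded in the remark immediately after the lemma: for $\lambda>N_\varepsilon$ and $p$ deep in the annulus, $\mathrm{inj}_{\tilde g}(p)$ is \emph{realized} by a homotopically nontrivial geodesic loop. That realization is used essentially in case~(b) of Lemma~\ref{positive injective radius} and in the Gromov--Hausdorff step of the proof of Theorem~\ref{nonexistence of flat complete metric}; your approach, while sufficient for the lemma as stated, does not supply that ingredient.
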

  \begin{proof}
   If not, $p_k \rightarrow \Gamma_a$, then $\mathrm{d}_{\tilde{g}}(p_k,\Gamma_1)\to +\infty$ since $\tilde{g}$ is complete, so we could assume $p_k\in D_{(a,1-\varepsilon)}=\{z\in \Omega|a<|z|<1-\varepsilon\}$ with out loss of generality.

   Claim: under the condition $\lambda\ge M=N_{\varepsilon}$, we have $\mathop{\mathrm{inj}}_{\tilde{g}}(p_k)$ is less than $\mathrm{d}_{\tilde{g}}(p_k,\Gamma_1)$ and hence is realized by a geodesic loop $\tilde{\gamma}_k$ with $L_{\tilde{g}}(\tilde{\gamma}_k)\le L_{\tilde{g}}(\gamma_k)$\, (as a result, $L_{\tilde{g}}(\tilde{\gamma}_k)\to \tilde{l}_0$).

   In fact, it is enough to prove that for any $\gamma\in H$ with $\gamma(0)=p_k$, if $\exists t_1\in [0,2\pi]$ such that $\gamma(t_1)\in\Gamma_1$, then there exists a shorter $\tilde{\gamma}\in H$ such that $\tilde{\gamma}(0)=p_k$ and $\mathrm{Im}\tilde{\gamma}\cap\Gamma_1=\varnothing$. To find such $\tilde{\gamma}$, we note
   \begin{equation*}
     \left\{
     \begin{aligned}
     &\gamma(0)=\gamma(1)=p_k\in D_{(a,1-\varepsilon)}\\
     &\gamma(t_1)\in\Gamma_1
     \end{aligned}
     \right.
   \end{equation*}
   $\Longrightarrow$ there exist the minimal $t_2\in[0,t_1]$ and then the maximal $t_3\in[0,t_2]$ such that $\gamma(t_2)\in\Gamma_{1-\frac{1}{2}\varepsilon}$ and $A:=\gamma(t_3)\in \Gamma_{1-\varepsilon}$, the maximal $t'_2\in[t_1,2\pi]$ and then the minimal $t'_3\in[t'_2,2\pi]$ such that $\gamma(t'_2)\in\Gamma_{1-\frac{1}{2}\varepsilon}$ and $B:=\gamma(t'_3)\in\Gamma_{1-\varepsilon}$.
   let
   \begin{equation*}
     \tilde{\gamma}(t)=
     \begin{cases}
       \gamma(t), t\in [0,t_3]\cup[t'_3,2\pi]\\
       \widetilde{AB}(\mathrm{the\ clockwise\ arc\ on\ the\ circle\ \Gamma_{\varepsilon}\ joining\ A\ and\ B }), t\in[t_3,t'_3]
     \end{cases}
   \end{equation*}
   then $\mathrm{Im}\tilde{\gamma}\cap\Gamma_1=\varnothing$, $\tilde{\gamma}\in H$ (since $[p_k\widetilde{AB}p_k]+[p_k\widetilde{BA}p_k]=[ABA]=[S^1]\ne 0$, we may change $\widetilde{AB}$ to $\widetilde{BA}$ if necessary to guarantee $[\tilde{\gamma}]\ne 0$) and
   \begin{align*}
   L_{\tilde{g}}(\gamma)-L_{\tilde{g}}(\tilde{\gamma})
         \ge\tilde{\mathrm{d}}(A,\gamma(t_1))-L_{\tilde{g}}(\Gamma_{1-\varepsilon})
         \ge\tilde{\mathrm{d}}(\Gamma_{1-\frac{1}{2}\varepsilon},\Gamma_1)-L_{\tilde{g}}(\Gamma_{1-\varepsilon})
         >0
   \end{align*}
   where we use the condition $\lambda>N_{\varepsilon}$ in the last step. In fact,
   if $\tilde{\mathrm{d}}(\Gamma_{1-\frac{1}{2}\varepsilon},\Gamma_1)-L_{\tilde{g}}(\Gamma_{1-\varepsilon})\le 0$, then $$(1-\varepsilon)^{2\lambda}l_{1-\varepsilon}\ge\tilde{l}_{1-\varepsilon}=L_{\tilde{g}}(\Gamma_{1-\varepsilon})
   \ge\tilde{d}(\Gamma_{1-\frac{1}{2}\varepsilon},\Gamma_1)
   \ge(1-\frac{1}{2}\varepsilon)^{2\lambda}d_{1-\frac{1}{2}\varepsilon},$$
   i.e. $\lambda\le N_{\varepsilon}$.
   we find the $\tilde{\gamma}$ and prove the claim.

   Now, minimize the pointed class $H_k:=\{[\gamma]\ne 0|\gamma(0)=\gamma(1)=p_k\}$ and we get a geodesic loop $\tilde{\gamma}_k$ which realizes $2\mathop{\mathrm{inj}}_{\tilde{g}}(p_k)=L_{\tilde{g}}(\tilde{\gamma}_k)$. So, $\mathop{\mathrm{inj}}_{\tilde{g}}(p_k)\to\frac{1}{2}\tilde{l}_0<+\infty$ as $k\to\infty$---A contradiction!
  \end{proof}
  \begin{rmk}
    During the proof, we see that $\mathop{\mathrm{inj}}_{\tilde{g}}(p)$ could be realized by a homotopic nontrivial geodesic loop if $\lambda>N_{\varepsilon}$ and $p\in D_{(a,1-\varepsilon)}$ although $(\Omega_a,\tilde{g})$ is a complete manifold with boundary.
  \end{rmk}

  The following conclusion of the two lemmas above means, somehow, the complex structure of a complete annulus with positive width could control its injective radius.
  \begin{lemma}\label{positive injective radius}
     Assume $g$ is a fixed flat complete metric with positive width on $\Omega_a$, then the modified metric $\tilde{g}(z)={|z|}^{2\lambda}g(z)$ is also flat and complete. Moreover, there  exists a constant $M>0$, such that for any $\lambda\ge M$, one of the following happens:
    \begin{enumerate}[\rm(a)]
      \item  $\forall q_k \to \Gamma_a$, $\mathop{\mathrm {inj}}_{\tilde{g}}(q_k)\rightarrow+\infty$. In this case, for any mimimal sequence $\gamma_k$ of $L_{\tilde{g}}$, there exists a compact set $K\subset\subset\Omega$ with $K\cap\Gamma_1=\varnothing$ such that $\mathrm{Im}\gamma_k\subset K$ for $k$ large enough;
      \item $\exists p_k\rightarrow \Gamma_a$, $\mathop{\mathrm {inj}}_{\tilde{g}}(p_k)\rightarrow\frac{1}{2}\tilde{l}_0$.
    \end{enumerate}
  \end{lemma}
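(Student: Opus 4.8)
The plan is to read the two alternatives off from the two preceding lemmas, using the geodesic–loop realization argument buried inside the proof of Lemma \ref{realize injective radius} as the glue. First, that $\tilde g=|z|^{2\lambda}g$ is again flat and complete is exactly the remark already recorded after Lemma \ref{lemma 2.1}: writing $g=e^{2u}|dz|^2$ with $u$ harmonic, the extra factor $e^{2\lambda\log|z|}$ preserves flatness because $\lambda\log|z|$ is harmonic on $\Omega_a$, and $|z|^{2\lambda}\ge a^{2\lambda}>0$ gives $d_{\tilde g}\ge a^{2\lambda}d_g$, so $\tilde g$-Cauchy sequences are $g$-Cauchy and hence convergent. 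I would then fix $\varepsilon>0$ small enough that the thresholds $M_\varepsilon,N_\varepsilon>0$ appearing in the two preceding proofs are positive, set $M:=M_\varepsilon+N_\varepsilon+1$, and enlarge it if necessary so that the collar estimate below also applies; for $\lambda\ge M$ both lemmas are then available.

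Now fix $\lambda\ge M$ and let $(\gamma_k,p_k)$ be any minimal sequence for $L_{\tilde g}$. Applied to any subsequence, the (unnamed) lemma preceding Lemma \ref{realize injective radius} gives $p_k\nrightarrow\Gamma_1$, so after passing to a subsequence either $p_k\to p_\infty$ with $p_\infty$ an interior point, or $p_k\to\Gamma_a$. In the second case I would run verbatim the final step of the proof of Lemma \ref{realize injective radius} (which uses only $\lambda\ge N_\varepsilon$ and $p_k\to\Gamma_a$, not the full hypothesis of that lemma): minimizing the pointed class $\{[\gamma]\neq 0:\gamma(0)=\gamma(1)=p_k\}$ produces a geodesic loop $\tilde\gamma_k$, disjoint from $\Gamma_1$, with $2\,\mathrm{inj}_{\tilde g}(p_k)=L_{\tilde g}(\tilde\gamma_k)\le L_{\tilde g}(\gamma_k)\to\tilde l_0$; since $\tilde\gamma_k\in H$ also $L_{\tilde g}(\tilde\gamma_k)\ge\tilde l_0$, whence $\mathrm{inj}_{\tilde g}(p_k)\to\tfrac{1}{2}\tilde l_0$ — this is alternative (b).

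In the first case I would argue that the hypothesis of Lemma \ref{realize injective radius} must then hold, so that (b) is excluded and we are in (a). Granting it, that lemma gives $p_k\nrightarrow\Gamma_a$ as well, and one upgrades to the full images as follows: since $L_{\tilde g}(\gamma_k)\to\tilde l_0<+\infty$ we have $L_{\tilde g}(\gamma_k)\le L:=\tilde l_0+1$ eventually, and a homotopically nontrivial loop of length $<L$ cannot pass through a point $q$ with $\mathrm{inj}_{\tilde g}(q)\ge L/2$, because re-basing the loop at $q$ (which does not change its free homotopy class) confines it to the embedded disk $B_{\tilde g}\bigl(q,\mathrm{inj}_{\tilde g}(q)\bigr)$, making it contractible; by the hypothesis $\mathrm{inj}_{\tilde g}\ge L/2$ on some neighbourhood of $\Gamma_a$, so $\mathrm{Im}\gamma_k$ avoids it, and the collar estimate from the preceding lemma (for $\lambda\ge M$, a loop that meets $\Gamma_1$ while carrying its nontrivial class in $\{a<|z|\le 1-\tfrac{1}{2}\varepsilon\}$ has $\tilde g$-length bounded below by $\tilde l_0$ plus a definite amount, contradicting $L_{\tilde g}(\gamma_k)\to\tilde l_0$) keeps $\mathrm{Im}\gamma_k$ off a neighbourhood of $\Gamma_1$; the complement of these two neighbourhoods is the required compact $K\subset\subset\Omega$ with $K\cap\Gamma_1=\varnothing$.

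The step I expect to be the real obstacle is the claim opening the previous paragraph: that in the "interior" case the hypothesis of Lemma \ref{realize injective radius} holds — equivalently, that the two alternatives (a) and (b) are exhaustive and that the limit in (b) is exactly $\tfrac{1}{2}\tilde l_0$ rather than merely bounded. This reduces to understanding the width near $\Gamma_a$: concretely, that $\inf\{L_{\tilde g}(\gamma):\gamma\in H,\ \mathrm{Im}\gamma\subset\{a<|z|<a+\delta\}\}$ decreases to $\tilde l_0$ as $\delta\downarrow 0$ unless $\mathrm{inj}_{\tilde g}$ blows up there, which is a statement about the behaviour of the circle lengths $L_{\tilde g}(\Gamma_r)$, and ultimately of the harmonic potential $u$, as $r\to a$. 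This is precisely the geometric rigidity of the hyperbolic end $a>0$ that is later converted into the vanishing-capacity contradiction of Theorem \ref{nonexistence of flat complete metric}.
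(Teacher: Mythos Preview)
The paper's own proof is literally one sentence: ``Add all the lemmas above together.'' So you are supplying far more than the paper does, and most of what you write (flatness/completeness of $\tilde g$, the choice of $M$, the geodesic-loop realization giving $\mathrm{inj}_{\tilde g}(p_k)\to\tfrac12\tilde l_0$ when the basepoints of a minimal sequence run to $\Gamma_a$, and the upgrade from ``basepoints stay compact'' to ``images stay compact'') is a reasonable gloss on that sentence.

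Where your route diverges from the paper's is in the case split. The paper's implicit dichotomy is the tautological one: either the first clause of~(a), namely ``$\forall\,q_k\to\Gamma_a$, $\mathrm{inj}_{\tilde g}(q_k)\to\infty$'', holds or it fails. If it holds, \emph{both} preceding lemmas apply to any minimal sequence and give $p_k\nrightarrow\Gamma_1$ and $p_k\nrightarrow\Gamma_a$; your image-containment argument then finishes~(a). If it fails, one needs to produce the sequence in~(b). You instead split on where the basepoints of a fixed minimal sequence accumulate, and then try to argue that ``basepoints stay interior'' forces~(a)'s hypothesis. That implication is exactly the obstacle you flag, and it is a genuine gap in \emph{your} arrangement---but it is an artifact of the case split, not of the mathematics. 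Under the paper's dichotomy this step simply does not arise.

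The residual subtlety you sense is real, though: when (a)'s hypothesis fails, the remark after Lemma~\ref{realize injective radius} only gives $\mathrm{inj}_{\tilde g}(q_k)\ge\tfrac12\tilde l_0$ for the offending sequence, not equality in the limit. The paper's one-line proof does not address this either; in fact, the paper silently defers it to the proof of Theorem~\ref{nonexistence of flat complete metric}, where it passes to the points $p_k'$ minimizing $\mathrm{inj}_{\tilde g}$ on $\Omega_a^{p_k}$ and squeezes to get $\mathrm{inj}_{\tilde g}(p_k')\to\tfrac12\tilde l_0$. So your instinct that the exact value in~(b) needs its own argument is correct; it is just that the paper places that argument downstream rather than inside this lemma.
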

  \begin{proof}
    Add all the lemmas above together.
  \end{proof}

  \begin{proof}[Proof of Theorem~\ref{nonexistence of flat complete metric}]
    Assume there is a flat complete metric $g=df\otimes df=e^{2u}|dz|^2$ for some conformal immersion $f\in W^{2,2}_{conf,loc}(\Omega_a)$ on $\Omega_a$ and take notations as before.

    On the one hand, if case $\rm(a)$ happens  in the last lemma, then to minimizing $L_{\tilde{g}}$ in $H$ is equal to minimize it in the subset $H_K=\{\gamma\in H|\mathrm{Im}\gamma\subset K\}$. But then $Cartan$'s existence theorem of closed geodesics\cite[sec. 12.2]{do Carmo} implicates that there exists a closed geodesic $\gamma:\rm{S^1}\to K\subset\subset \Omega$ (investigate the proof of $Cartan$'s theorem and note the compactness of $K$ works when using $Arzela-Ascoli$ Lemma). Now, as in Lemma~\ref{lemma 2.1}, we let
    $$
    \eta(x)=
    \begin{cases}
      d_{\tilde{g}}(x,\gamma),&x \;\textrm{between}\; \Gamma_a\; \textrm{and}\; \gamma(\textrm{denote as}\; x \in \Omega_{\gamma}),\\
      0,&x\;\textrm{between}\; \gamma\; \textrm{and}\; \Gamma_1(\textrm{denote as}\; x \notin \Omega_{\gamma}).\\
    \end{cases}
    $$
    and $\beta\text{, }\eta^A$ as before, then by using Fermi's coordinate(see the second remark below) corresponding to the normal exponential map of the geodesic submanifold $\gamma$,  we get
    \begin{align*}
      Cap(\Omega_a)\le\int_{\Omega_a}{|\nabla_g\eta^A|}^2\mathrm{d}\mu_g
                 \le\int_{\varepsilon}^{A-\varepsilon}\int_0^{2\pi}{|\frac{2}{A}|}^2C\mathrm{d}s\mathrm{d}t
                 =\frac{4 C(A-2\varepsilon)L_{\tilde{g}}(\gamma)}{A^2}.
    \end{align*}
     Let $A\to +\infty$, then we have $Cap(\Omega_a)=0$, a contradiction!

    On the other hand, if case $(\rm{b})$ happens in the last lemma, consider the sequence of flat complete pointed manifolds $\{(\Omega_k:=\Omega_a,g_k:=\tilde{g},p_k)\}_{k=1}^{\infty}$. Take $p_k'\in\Omega_a^{p_k}:=\{z\in\mathbb{C}|a<|z|\le|p_k|\}$ such that  $\mathrm{inj}_{\tilde{g}}(p_k')=\inf_{p\in \Omega_a^{p_k}}\mathrm{inj}_{\tilde{g}}(p)$, then $p_k'\to\Gamma_a$. Furthermore, by the remark after lemma~\ref{realize injective radius}, there exist $\gamma_k\in H$ such that $\mathrm{inj}_{\tilde{g}}(p_k')=\frac{1}{2}L_{\tilde{g}}(\gamma_k)$. Then by definition of $\tilde{l}_0$, we have
    $$\frac{1}{2}\tilde{l}_0\le\frac{1}{2}\liminf_{k\to\infty}L_{\tilde{f}}(\gamma_k)
    =\liminf_{k\to\infty}\mathrm{inj}_{\tilde{g}}(p_k')\le\limsup_{k\to\infty}\mathrm{inj}_{\tilde{g}}(p_k)
    =\frac{1}{2}\tilde{l}_0,$$
    i.e. $\lim_{k\to\infty}\mathrm{inj}_{\tilde{g}}(p_k')=\frac{1}{2}\tilde{l}_0$. So, W.L.O.G. we may assume $p_k=p_k'$. Thus the sequence of pointed flat(hence Einstein) complete manifolds $\{(\Omega_k:=\Omega_a^{\frac{1}{2}p_k},g_k:=\tilde{g},\frac{1}{2}p_k)\}_{k=1}^{\infty}$ have injective radius uniformly bounded from below by $\frac{1}{4}\tilde{l}_0$ hence converges\cite[sec. 10.4, 10.5]{P}(see also $Anderson$'s original article \cite{A}) to a limit flat complete manifold(without boundary) $(\Sigma_{\infty},g_{\infty},p)$ smoothly when passing to a subsequence.
    i.e. there exist local diffeomorphisms $f_k:(\Sigma_{\infty},p)\rightarrow(\Omega^k\subset\Omega_a, p_k)$ with $f_k^{*}(g_k|_{\Omega^k})\rightarrow g_{\infty}$ smoothly on compact subsets of $\Sigma_{\infty}$. So, by the well known uniformization theorem for Riemannian surface,
    $(\Sigma_{\infty},g_{\infty},p)=\mathbb{R}^2,\mathrm{S^1}\times\mathbb{R}$ or $\mathrm{S^1}\times\mathrm{S^1}$.
    But we also have $\mathop{\mathrm{inj}}_{\tilde{g}}(p_k)\rightarrow\frac{1}{2}\tilde{l}_0<+\infty$, so there exist geodesic loops at$\{p_k\}_{k=1}^{\infty}$ such that
    \begin{equation*}
      \left\{
      \begin{aligned}
        &\gamma_k(0)=p_k\\
        &L_{\tilde{g}}(\gamma_k)\to \tilde{l}_0.\\
      \end{aligned}
      \right.
    \end{equation*}
    Let $\gamma_k^{\infty}(t):=f_k^{-1}(\gamma_k(t))$, then $\gamma_k^{\infty}(t)$ converges to a geodesic loop
    $\gamma^{\infty}(t)$ on $\Sigma_{\infty}$ with $L_{g_\infty}(\gamma^{\infty})=\tilde{l}_0<+\infty$ and so $\Sigma_{\infty}\ne\mathbb{R}^2$. It could also not be the compact manifold $\mathrm{S^1}\times\mathrm{S^1}$ since noncompact manifolds do not converges to compact manifold, so $\Sigma_{\infty}$ must be
    $\mathrm{S^1}\times\mathbb{R}$.

     Choose a closed circle(geodesic) $\mathrm{S^1}$ on $\Sigma_{\infty}$ and define
     $$
    \eta(x)=
    \begin{cases}
      d_{g_{\infty}}(x,\gamma),&x \text{ at one side of $\mathrm{S^1}$}\\
      0,&x\text{ at the other side of $\mathrm{S^1}$}.\\
    \end{cases}
    $$
    For $\forall \delta>0$, choose $\beta$, $\eta^A$ and $A$ large enough as before, we get
    $$\int_{\Sigma_{\infty}}{|\nabla_{g_\infty}\eta^A|}_{g_{\infty}}^2\mathrm{d}\mu_{\infty}<\delta$$
    Now, let $\eta_k^A:=\eta^A\circ f_k^{-1}$, then for $h_k:=f_k^*\tilde{g}$, we have
    \begin{align*}
      {|\nabla_{h_k}\eta^A|}_{h_k}={|\nabla_{\tilde{g}}\eta_k^A|}_{\tilde{g}}
    \end{align*}
    and hence
    \begin{align*}
      \int_{\Omega_a^k}{|\nabla_{\tilde{g}}\eta_k^A|}_{\tilde{g}}^2\mathrm{d}\mu_k
      \le\int_{\Sigma_{\infty}}{|\nabla_{h_k}\eta^A|}_{h_k}^2\mathrm{d}\mu_k
      \to\int_{\Sigma_{\infty}}{|\nabla_{g_{\infty}}\eta^A|}_{g_{\infty}}^2\mathrm{d}\mu_{\infty}<\delta.
    \end{align*}
     So, for $k$ large enough,
    \begin{align*}
      Cap(\Omega)\le\int_{\Omega_a^k}{|\nabla_{\tilde{g}}\eta_k^A|}_{\tilde{g}}^2\mathrm{d}\mu_k
                 \le\int_{\Sigma_{\infty}}{|\nabla_{g_{\infty}}\eta^A|}_{g_{\infty}}^2\mathrm{d}\mu_{\infty}+\delta
                 \le 2\delta.
    \end{align*}
    thus $Cap(\Omega)=0$, again a contradiction!
  \end{proof}

  \begin{rmk}
    To get the volume form under the Fermi's coordinate corresponding to $\gamma$, we should solve Jacobi's equation
    \begin{equation*}
    \left\{
    \begin{aligned}
      &J''+R(\dot{\sigma},J)\dot{\sigma}=0\\
      &J'(0)+A_{\dot{\sigma}(0)}(J'(0))\perp TN\\
      &J(0)\in TN
    \end{aligned}
    \right.
    \end{equation*}
    where $A$ is the Weingarten's transform of the submanifold $N$ and $\sigma$ is a geodesic perpendicular to $N$. In the case $N$ is a geodesic on a surface, the Weingarten's transform vanishes. So when just consider those Jacobi fields perpendicular to the chosen geodesic $\sigma$ with initial velocity $J'(0)\perp\dot{\sigma}(0)$, the equation becomes to
    \begin{equation*}
    \left\{
    \begin{aligned}
      &J''+R(\dot{\sigma},J)\dot{\sigma}=0\\
      &J'(0)=0\\
      &J(0)=C\gamma'(0)\\
    \end{aligned}
    \right.
    \end{equation*}
    whose unique solution satisfying ${|J(t)|}_{\tilde{g}}\equiv C$. In all, the volume form under the Fermi's coordinate is $vol_{\tilde{g}}=C\mathrm{d}s\mathrm{d}t$.
  \end{rmk}

  \subsection{General case}\label{general case}
  Now, it is possible to prove $Huber$'s theorem\cite{Huber}. The key point is the next lemma observed by Jingyi Chern and Yuxiang Li in \cite{Jingyi Chern Yuxiang Li}, which reduces the extrinsic condition $\int_{\Sigma}|A|^2<\infty$ to the intrinsic condition $l_0(g)>0$.
  \begin{lemma}\label{positive width}
     Assume $g=df\otimes df=e^{2u}|dz|^2$ for some conformal immersion $f\in W^{2,2}_{conf,loc}(\Omega_a)$ is a complete metric on $\Omega_a$ and consider the length functional $L_g$ on $H$, then
    $$
    l_0:=\mathop{\rm{inf}}_{\gamma \in H}L_g(\gamma)>0
    $$
  \end{lemma}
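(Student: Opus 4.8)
The plan is to argue by contradiction, assuming $l_0:=\inf_{\gamma\in H}L_g(\gamma)=0$ and choosing simple, homotopically nontrivial loops $\gamma_k$ with $L_g(\gamma_k)\to 0$. The first step is to localize the $\gamma_k$ near $\Gamma_a$. Since $f\in W^{2,2}_{\mathrm{loc}}$ forces $u$ to be continuous, on every compact annulus $\{\delta\le|z|\le 1\}$ with $\delta>a$ the conformal factor $e^u$ is bounded below, so a nontrivial loop staying there has length $\ge 2\pi\delta\cdot\min_{\{\delta\le|z|\le1\}}e^u>0$; hence $\min_{\gamma_k}|z|\to a$. On the other hand, completeness of $g$ gives $d_g(\Gamma_s,\Gamma_1)\to\infty$ as $s\to a^+$, so if some $\gamma_k$ (along a subsequence) met $\{|z|\ge\delta\}$ for a fixed $\delta$ while $\min_{\gamma_k}|z|\to a$, it would contain a sub-arc joining $\Gamma_{s_k}$ (with $s_k\to a$) to $\Gamma_\delta$ and hence have length $\ge d_g(\Gamma_{s_k},\Gamma_\delta)\to\infty$, a contradiction. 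Thus also $\max_{\gamma_k}|z|\to a$, and in particular for every $\rho\in(a,1)$ we get $\gamma_k\subset\Omega_{(a,\rho)}$ once $k$ is large.

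The second step uses the Hardy estimate to replace $g$ by a flat metric near $\Gamma_a$. Since $\int_{\Omega_a}|A|^2\,d\mu_g<\infty$, I would fix $\rho\in(a,1)$ so small that $\int_{\Omega_{(a,\rho)}}|A|^2\,d\mu_g$ lies below the threshold of Theorem~\ref{MSFS}, and apply that theorem to the Gauss map $G$ of the immersion on $\Omega_{(a,\rho)}$ (extended to $\mathbb{C}$ as in~\cite{MS}); recalling $G^*\omega=Ke^{2u}\,dx\wedge dy=-\triangle u\,dx\wedge dy$, this solves $\triangle v=-\triangle u$ on $\Omega_{(a,\rho)}$ with $\|v\|_{L^\infty}\le c\,C(n,\varepsilon)\|DG\|_{L^2}^2<\infty$. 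Then $h:=u+v$ is harmonic on $\Omega_{(a,\rho)}$, the metric $g_h:=e^{2h}|dz|^2$ is flat, $e^{-2\|v\|_\infty}g_h\le g\le e^{2\|v\|_\infty}g_h$, so $g_h$ is itself complete toward $\Gamma_a$ and $L_{g_h}(\gamma_k)\to 0$.

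The third step, which is the crux, is to rule out arbitrarily short nontrivial loops for a flat metric on $\Omega_{(a,\rho)}$ complete toward $\Gamma_a$. Writing $h=\alpha\log|z|+\mathrm{Re}\,\Phi$ with $\alpha\in\mathbb{R}$ and $\Phi$ holomorphic on $\Omega_{(a,\rho)}$, the (multivalued) primitive $\mathrm{Dev}$ of $z^{\alpha}e^{\Phi(z)}$ is a local isometry of $(\Omega_{(a,\rho)},g_h)$ into $(\mathbb{C},|d\zeta|^2)$ whose holonomy around the generator of $\pi_1$ is the Euclidean motion $\zeta\mapsto e^{2\pi i\alpha}\zeta+\tau$. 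A nontrivial loop $\gamma$ then develops to a path from some $\zeta_0$ to $e^{2\pi i\alpha}\zeta_0+\tau$, so $L_{g_h}(\gamma)\ge|e^{2\pi i\alpha}\zeta_0+\tau-\zeta_0|$. If $\alpha\notin\mathbb{Z}$ the holonomy is conjugate to a nontrivial rotation about a point $\zeta_*$, completeness toward $\Gamma_a$ forces the developed end to run off to $|\zeta-\zeta_*|=\infty$ (a complete flat end with circular cross-section and rotational holonomy is a cone end), the developed surface stays at distance $\ge r_0>0$ from $\zeta_*$, and $L_{g_h}(\gamma)\ge 2r_0|\sin\pi\alpha|>0$. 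If $\alpha\in\mathbb{Z}$ and $\tau\ne 0$ the holonomy is a nonzero translation and $L_{g_h}(\gamma)\ge|\tau|>0$. If $\alpha\in\mathbb{Z}$ and $\tau=0$ the holonomy is trivial, $\mathrm{Dev}$ is a single-valued holomorphic map, the $\Gamma_a$-end is sent to $\infty$, and every nontrivial loop encircles the bounded nondegenerate compact set cut off by $\mathrm{Dev}(\Gamma_\rho)$, whence $L_{g_h}(\gamma)$ is at least twice the diameter of that set, which is positive. In all cases $\inf_\gamma L_{g_h}(\gamma)>0$, contradicting $L_{g_h}(\gamma_k)\to 0$; hence $l_0>0$.

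I expect the main obstacle to be precisely this third step: one must argue carefully that completeness at $\Gamma_a$ determines the global shape of the developed flat surface in each holonomy type (equivalently, invoke the classification of complete flat ends with a circular cross-section), which is where the hypotheses are genuinely spent. A secondary, more routine point is justifying that Theorem~\ref{MSFS}, stated on $\mathbb{C}$, applies on the annular end $\Omega_{(a,\rho)}$ after a suitable extension of the Gauss map.
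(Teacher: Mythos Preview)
The paper does not give its own proof of this lemma; it simply cites \cite[Prop.~3.4]{Jingyi Chern Yuxiang Li}. So there is no in-paper argument to compare step by step. That said, your strategy is structurally quite different from how the paper uses this lemma: the paper takes $l_0(g)>0$ as input (from Chern--Li), then performs the M\"uller--\v{S}ver\'ak reduction to a flat complete metric $g_1$ with $l_0(g_1)>0$, and finally invokes Theorem~\ref{nonexistence of flat complete metric} to reach a contradiction. You are running essentially the same reduction \emph{inside} the proof of the lemma and then trying to prove, for the resulting flat metric, that $l_0(g_h)>0$. Note that if your Step~4 were correct as stated (flat and complete toward $\Gamma_a$ $\Rightarrow$ positive width), then combined with Theorem~\ref{nonexistence of flat complete metric} it would yield the stronger statement that no flat metric on $\Omega_a$ can be complete toward $\Gamma_a$; your argument and the paper's Section~\ref{flat case} would be two halves of that.

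The genuine gap is in Step~4, and it is more than a matter of care. In the rotation case you assert that ``the developed surface stays at distance $\ge r_0>0$ from $\zeta_*$''. Completeness toward $\Gamma_a$ only tells you that every divergent path has infinite developed length; it does not by itself prevent the developed image from meeting $\zeta_*$. If $D(\tilde p_*)=\zeta_*$ for some $\tilde p_*$, then $D(\tilde p_*+2\pi i)=R_{2\pi\alpha}(\zeta_*)=\zeta_*$ as well, so equivariance is consistent with $\zeta_*$ lying in the image, and a nontrivial loop based at the projection of $\tilde p_*$ develops to a closed curve through $\zeta_*$ with no length bound from holonomy. In the trivial-holonomy case the claim that ``every nontrivial loop encircles the bounded nondegenerate compact set cut off by $\mathrm{Dev}(\Gamma_\rho)$'' is unjustified: $F$ is only locally injective, $F(\gamma)$ is merely an immersed closed curve in $\mathbb{C}$, and there is no reason it must enclose a fixed set. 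Likewise, ``the $\Gamma_a$-end is sent to $\infty$'' does not follow just from the developed paths having infinite Euclidean length; infinite-length planar curves need not leave compact sets. To make this route work you would need a genuine classification of complete flat ends on a finite annulus (which is exactly what you flag as the obstacle), and that is not supplied. Steps~1--3 are sound, modulo the routine extension of the Gauss map that the paper itself carries out in the proof of Huber's theorem.
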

  \begin{proof}
    See \cite[prop. 3.4]{Jingyi Chern Yuxiang Li}
  \end{proof}
  \begin{thm}
  Assume $\Sigma$ is a complete surface immersed in $\mathbb{R}^n$ with finite total curvature $\int_{\Sigma}|A|^2d\mu_g<\infty$, then $\Sigma$ is conformally
equivalent to a compact Riemannian surface with finitely many points deleted.
  \end{thm}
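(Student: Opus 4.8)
The plan is to reduce the theorem to the two facts already available: the conformal classification of annular ends, and the nonexistence result Theorem~\ref{nonexistence of flat complete metric}. First I would invoke the topological part of Huber's theorem \cite{Huber}: a complete surface with $\int_\Sigma|K|\,d\mu_g<\infty$ is diffeomorphic to a closed surface $\Sigma_{\mathfrak g}$ with finitely many points $p_1,\dots,p_l$ removed; since a surface immersed in $\mathbb R^n$ satisfies the Gauss-equation bound $|K|\le|A|^2$ pointwise, the hypothesis $\int_\Sigma|A|^2\,d\mu_g<\infty$ supplies $\int_\Sigma|K|\,d\mu_g<\infty$, so this applies. It then remains to understand each end. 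By uniformization of annuli, a deleted neighbourhood of $p_i$ is conformally an annulus $\Omega_{a_i}=\{a_i<|z|\le1\}$ with some $a_i\ge0$, where $\Gamma_1=\{|z|=1\}$ glues onto the rest of $\Sigma$ and the induced metric $g$ is complete towards $\Gamma_{a_i}$. If every $a_i=0$, each end is conformally a punctured disc; filling in the punctures produces a closed Riemann surface $\widehat\Sigma$ with $\Sigma=\widehat\Sigma\setminus\{p_1,\dots,p_l\}$ conformally, and any compatible metric on $\widehat\Sigma$ finishes the proof. So everything comes down to excluding the hyperbolic case $a_i>0$.

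Suppose $a:=a_i>0$ for some end. Passing to a smaller neighbourhood $\{a<|z|\le b\}$, which is conformally $\Omega_{a/b}$ with $a/b\in(0,1)$, and using $\int_\Sigma|A|^2<\infty$ together with monotone convergence, I may assume the total curvature of this end is as small as I wish; after relabelling this is a metric $g=df\otimes df=e^{2u}|dz|^2$ on $\Omega_a$, complete towards $\Gamma_a$, with $f\in W^{2,2}_{conf,loc}$ (the standard regularity of a conformal immersion with square-integrable second fundamental form) and $\int|A|^2$ small. By Lemma~\ref{positive width}, $l_0(g)>0$, so $g$ has positive width. The decisive step is the reduction to the flat case. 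The conformal factor solves the Gauss equation $-\triangle u=*Kd\mu_g$; writing the curvature density as $G^{*}\omega$ for the Gauss map $G$ and using smallness of $\int|A|^2$, I would apply Theorem~\ref{MSFS} (after normalising the curvature form to have zero average and extending $G$ across $\Gamma_1$ onto all of $\mathbb C$) to obtain a solution $u_0$ of the same equation with $\|u_0\|_{L^\infty}$ small. Then $u$ equals a harmonic function $w$ plus the bounded error $\pm u_0$, so $\widehat g:=e^{2w}|dz|^2$ is flat and $g$ and $\widehat g$ are uniformly bi-Lipschitz. In particular $\widehat g$ is a flat metric on $\Omega_a$ which is still complete towards $\Gamma_a$ (its distances are comparable to those of $g$) and still has positive width ($L_{\widehat g}$ is comparable to $L_g$ on $H$), contradicting Theorem~\ref{nonexistence of flat complete metric}. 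Hence $a_i=0$ for every $i$, and the proof concludes as above.

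I expect the main obstacle to be exactly this reduction to the flat case, i.e.\ the clean application of the Hardy estimate Theorem~\ref{MSFS} on an annular end. One has to extend the Gauss map (or the curvature density) to all of $\mathbb C$ while keeping control of both the total curvature and the zero-average normalisation required by the theorem, and then turn $\|u_0\|_{L^\infty}<\infty$ into a genuinely uniform two-sided comparison $c^{-1}\widehat g\le g\le c\,\widehat g$ near the end --- uniform enough that completeness \emph{and} positivity of the width both transfer to $\widehat g$. This is the place where I would invoke the $L^\infty$-estimate for the new (flat) radius of \cite{Sun-Zhou}: it furnishes precisely this quantitative comparison between $g$ and the flat metric $\widehat g$ near a small-total-curvature end, so that the resulting flat, complete, positive-width metric on $\Omega_a$ really does contradict Theorem~\ref{nonexistence of flat complete metric}.
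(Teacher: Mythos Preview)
Your proposal is correct and follows essentially the same route as the paper: reduce to an annular end $\Omega_a$ with small total curvature, invoke Lemma~\ref{positive width} for $l_0(g)>0$, use Theorem~\ref{MSFS} (after extending the Gauss map across $D_a$ and reflecting across $\Gamma_1$, which also enforces the zero-average condition) to split $u=h+u_0$ with $u_0\in L^\infty$ and $h$ harmonic, and then contradict Theorem~\ref{nonexistence of flat complete metric} via the flat metric $e^{2h}|dz|^2$. Your hedge about needing \cite{Sun-Zhou} is unnecessary here: the paper's proof uses only the bound $\|u_0\|_{L^\infty}\le C$, from which the bi-Lipschitz comparison $e^{-2C}g\le e^{2h}|dz|^2\le e^{2C}g$ (hence completeness and $l_0(e^{2h}|dz|^2)\ge e^{-C}l_0(g)>0$) follows directly, exactly as you outlined before mentioning \cite{Sun-Zhou}.
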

  \begin{proof}
   By the theorem from differential topology , $\Sigma$ is diffeomorphic to $\Sigma_g\backslash \{p_1,\ p_2\ldots p_m\}$ for some compact surface $\Sigma_g$ with genus $g$. Around each $p_i$, the complex structure of $\Sigma$ is either $D_1\backslash \{0\}$ or $\Omega_a$($a>0$).  We just need to rule out the latter one.

   If it is in the latter case, then there exists a small punctured neighborhood $\mathring{B}(p_i)$ of $p_i$ in $\Sigma_g$ such that $\int_{\mathring{B}(p_i)}|A|^2d\mu_g=\varepsilon_1\ll 1$ and a conformal parametrization $f:\Omega_a\to \mathring{B}(p_i)\subset \Sigma\looparrowright\mathbb{R}^n$. So, by lemma~\ref{positive width},
   $$l_0(g)=\mathop{\rm{inf}}_{\gamma \in H}L_g(\gamma)>0.$$
   Set $g=df\otimes df=e^{2u}g_0$ and let $\varphi=G\circ f:\Omega_a\to \mathbb{CP}^{n-1}$ be the representation of Gauss map under the conformal coordinate $f$, then
   $$-\triangle u=K e^{2u}=*\varphi^*\omega\ \mathrm{in}\ \Omega_a\  \mathrm{and}\  \int_{\Omega_a}|D\varphi|^2=\int_{\mathring{B}(p_i)}|A|^2d\nu_g\ll 1.$$
   Since $\mathbb{CP}^{n-1}$ is compact, we can extend $\varphi$ to $\tilde{\varphi}\in W^{1,2}(D_1, \mathbb{CP}^{n-1})$ such that $$\int_{D_1}|D\tilde{\varphi}\wedge D\tilde{\varphi}|\le\frac{1}{4}\int_{D_1}|D\tilde{\varphi}|^2\le C(a) \int_{\Omega_a}|D\varphi|^2\le\pi\varepsilon\ll 1.$$
   As in lemma~\ref{extend}, let
   $$
    \bar{\varphi}(z)=
    \begin{cases}
    \tilde{\varphi}(z), z\in D_1,\\
    \tilde{\varphi}(\frac{1}{\bar{z}}), z\in D_1^*=\{z||z|\ge1\},
    \end{cases}
    $$
   then by theorem~\ref{MSFS}, there exists a bounded continuous function $u_0$ solving the equation $-\triangle u_0=*\bar{\varphi}^*\omega$ on $\mathbb{C}$ such that
   $\mathop{\mathrm{lim}}_{z\to \infty}u_0(z)=0$. Let $h=u-u_0$, then $g_1:=e^{2h}g_0$ is a flat complete metric on $\Omega_a$ with
   $$l_0(g_1):=\mathop{\rm{inf}}_{\gamma \in H}L_{g_1}(\gamma)\ge e^{\min{u_0}}\mathop{\rm{inf}}_{\gamma \in H}L_g(\gamma)\ge e^{-C}l_0(g)>0,$$
   since $u_0$ is bounded and $\triangle h=0$. This contradicts to theorem \ref{nonexistence of flat complete metric}.
  \end{proof}

\end{document}